\newtheorem{theorem}{Theorem}
\theoremstyle{definition}
\newtheorem{definition}[theorem]{Definition}
\theoremstyle{remark}
\def\Aut{\operatorname{Aut}}
\def\SAut{\operatorname{SAut}}
\def\LND{\operatorname{LND}}
\def\HLND{\operatorname{HLND}}
\def\Pic{\operatorname{Pic}}
\def\AffCone{\operatorname{AffCone}}
\def\Ample{\operatorname{Ample}}
\def\Eff{\operatorname{\overline{NE}}}
\def\supp{\operatorname{supp}}
\def\Bs{{\mathrm{Bs}}}
\def\PP{{\mathbb P}}
\def\ZZ{{\mathbb Z}}
\def\NN{{\mathbb N}}
\def\G{{\mathbb G}}
\def\A{{\mathbb A}}
\def\K{{\mathbb K}}
\def\U{{\mathcal U}}
\def\bangle#1{\langle #1 \rangle}
\begin{document}

\tolerance=500
\author{A.Yu. Perepechko}
\title{Flexibility of affine cones over del Pezzo surfaces of degree 4 and 5}
\address{Department of Higher Algebra, Faculty of Mechanics and Mathematics, Moscow State University, Leninskie gory GSP-1, Moscow 119991, Russia}
\address{Universit\'e Grenoble I, Institut Fourier, UMR 5582 CNRS-UJF, BP 74, 34802 St Martin d'H\`eres c\'edex, France}
\email{perepechko@mccme.ru}
\thanks{This work was supported by The Ministry of education and science of Russian Federation, project 8214, in part by the Simons Foundation, and by RFBR grant nos. 12-01-00704 and 12-01-31342mol a.}

\begin{abstract}
We prove that the action of the special automorphism group on affine cones over del Pezzo surfaces of degree 4 and 5 is infinitely transitive.
\end{abstract}

\maketitle
\section{Introduction}
An affine algebraic variety $X$ defined over an algebraically closed field $\K$ of characteristic zero is called \emph{flexible} if the tangent space of $X$  at any smooth point is spanned by the tangent vectors to the orbits of one-parameter unipotent group actions \cite{flex}. In this paper we establish flexibility of affine cones over del Pezzo surfaces of degree 4 and 5.

It is well known that every effective action of one-dimensional unipotent group $\G_a=\G_a(\K)$ on~$X$ defines a locally nilpotent derivation $\delta\in\LND(\K[X])$ of the algebra of regular functions on $X$. All such actions generate a subgroup of \emph{special automorphisms} $\SAut X
\subset\Aut X$.

 A group $G$ is said to act on a set $S$ \emph{infinitely transitively} if it acts transitively on the set of $m$-tuples of pairwise distinct points in $S$ for any $m\in\NN$.

 The following theorem explains the significance of the flexibility concept.
\begin{theorem}[{\cite[Theorem 0.1]{flex}}]
  Let $X$ be an affine algebraic variety of dimension $\ge2$. Then the following conditions are equivalent:
  \vspace{-2pt}
  \begin{enumerate}
    \item The variety $X$ is flexible;
    \item the group $\SAut X$ acts transitively on the smooth locus $X_{\mathrm{reg}}$ of $X$;
    \item the group $\SAut X$ acts infinitely transitively on $X_{\mathrm{reg}}$.
  \end{enumerate}
\end{theorem}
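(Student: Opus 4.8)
The plan is to prove the equivalence by establishing $(1)\Rightarrow(2)$, $(2)\Rightarrow(1)$ and $(1)\Rightarrow(3)$; the remaining implication $(3)\Rightarrow(2)$ is the tautology that $1$-transitivity is a case of infinite transitivity, so this closes the circle. Throughout write $A=\K[X]$ and $n=\dim X$; for $\delta\in\LND(A)$ let $u_\delta$ be the associated vector field, so $u_\delta(x)\in T_xX$ is the velocity of the $\G_a$-orbit through $x$, and set $L_x=\operatorname{span}\{u_\delta(x):\delta\in\LND(A)\}\subseteq T_xX$, so that flexibility of $X$ means $L_x=T_xX$ for every $x\in X_{\mathrm{reg}}$. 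Here $X$ is irreducible, hence $X_{\mathrm{reg}}$ is connected. The elementary fact that drives everything is \emph{equivariance}: if $\varphi=\exp(t\delta_0)\in\SAut X$ then $d\varphi_x\bigl(u_\delta(x)\bigr)=u_{\varphi_*\delta}\bigl(\varphi(x)\bigr)$ where $\varphi_*\delta:=\varphi\circ\delta\circ\varphi^{-1}$ is again locally nilpotent, so $d\varphi_x(L_x)=L_{\varphi(x)}$. In particular $\dim L_x$ is constant along $\SAut X$-orbits, and the flexible locus $\{x\in X_{\mathrm{reg}}:L_x=T_xX\}$ is $\SAut X$-invariant and Zariski open (near any point $\dim L_x$ is realized by finitely many fixed $u_{\delta_i}$). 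I regard $(1)\Leftrightarrow(2)$ as soft; the substance is $(2)\Rightarrow(3)$.

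For $(1)\Rightarrow(2)$ I would fix $x_0\in X_{\mathrm{reg}}$ and $\delta_1,\dots,\delta_n$ with $u_{\delta_1}(x_0),\dots,u_{\delta_n}(x_0)$ a basis of $T_{x_0}X$; the morphism $\A^n\to X_{\mathrm{reg}}$, $(t_1,\dots,t_n)\mapsto\exp(t_n\delta_n)\circ\cdots\circ\exp(t_1\delta_1)\cdot x_0$, has invertible differential at the origin, so its image contains a Zariski-open neighbourhood of $x_0$. Hence every $\SAut X$-orbit in $X_{\mathrm{reg}}$ has nonempty interior, is therefore open by homogeneity, and by connectedness there is just one. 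For $(2)\Rightarrow(1)$: by equivariance and transitivity $r:=\dim L_x$ is independent of $x\in X_{\mathrm{reg}}$ and $\ge1$ (since $\SAut X$ is nontrivial and $n\ge2$). If $r<n$, then $D\colon x\mapsto L_x$ is an algebraic distribution of constant rank $r$ locally framed by $\G_a$-vector fields; each flow $\exp(t\delta)$ preserves $D$ by equivariance, so $\mathcal L_{u_\delta}$ preserves the sheaf of sections of $D$, whence $[u_\delta,u_{\delta'}]\in\Gamma(D)$ and $D$ is involutive. By the Frobenius theorem $X_{\mathrm{reg}}$ is foliated by $r$-dimensional leaves; every $\G_a$-orbit, being tangent to $D$, lies in one leaf, and since leaves partition $X_{\mathrm{reg}}$ an induction on word length shows every $\SAut X$-orbit lies in a single leaf. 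This contradicts transitivity, since an $r$-dimensional leaf with $r<n$ cannot be all of $X_{\mathrm{reg}}$; hence $r=n$ and $X$ is flexible.

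For $(1)\Rightarrow(3)$ I would induct on $m$, the base case $m=1$ being $(2)$, already at hand. The mechanism is the \emph{replica} construction: for $\delta\in\LND(A)$ and $f\in\ker\delta$ one has $f\delta\in\LND(A)$, and $\exp(t\,f\delta)$ fixes the zero set of $f$ pointwise. Given distinct points $p_1,\dots,p_{m+1}$ and $q_1,\dots,q_{m+1}$ in $X_{\mathrm{reg}}$, use $m$-transitivity to find $\varphi\in\SAut X$ with $\varphi(p_i)=q_i$ for $i\le m$; since $p_{m+1}\ne p_i$ we get $\varphi(p_{m+1})\notin F:=\{q_1,\dots,q_m\}$. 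Let $G_F\subseteq\SAut X$ be the subgroup generated by all replicas $\exp(t\,f\delta)$ with $\delta\in\LND(A)$, $f\in\ker\delta$ and $f|_F\equiv0$; it fixes $F$ pointwise and preserves $X_{\mathrm{reg}}\setminus F$, which is connected precisely because $n\ge2$. It is enough to find $\psi\in G_F$ with $\psi\bigl(\varphi(p_{m+1})\bigr)=q_{m+1}$, hence enough that $G_F$ act transitively on $X_{\mathrm{reg}}\setminus F$; by the open-orbit argument of $(1)\Rightarrow(2)$ this follows from the \emph{Key Lemma}: for every finite $F\subset X_{\mathrm{reg}}$ and every $y\in X_{\mathrm{reg}}\setminus F$, the vectors $f(y)\,u_\delta(y)$ with $\delta\in\LND(A)$, $f\in\ker\delta$, $f|_F\equiv0$, span $T_yX$.

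To prove the Key Lemma, pick by flexibility $\delta_1,\dots,\delta_n$ with $u_{\delta_1}(y),\dots,u_{\delta_n}(y)$ a basis of $T_yX$; it suffices to produce $f_i\in\ker\delta_i$ with $f_i|_F\equiv0$ and $f_i(y)\ne0$, for then $f_i(y)u_{\delta_i}(y)$ is again a basis. If $\ker\delta_i$ separates $y$ from every point of $F$, one subtracts suitable constants from separating invariants so each vanishes at one point of $F$ and multiplies them to obtain $f_i$. The one real difficulty — and, I expect, the crux of the whole theorem — is that $\ker\delta_i$ may fail to separate $y$ from some $q_j$ (for instance $y$ and $q_j$ may lie on a common orbit closure of $\exp(t\delta_i)$). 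I would resolve this using the abundance of $\G_a$-actions coming from flexibility: replace $\delta_i$ by a conjugate $(\exp(s\delta'))_*\delta_i$, noting that for small $s$ the velocity $u_{(\exp(s\delta'))_*\delta_i}(y)$ is a small perturbation of $u_{\delta_i}(y)$ — so the perturbed $n$-tuple is still a basis — while, since the locus on which separation fails is a proper subvariety, a generic small $s$ (with a suitable $\delta'$) moves $y$ off that locus, so the kernel of the conjugate does separate $y$ from $F$. Turning this perturbation argument into a rigorous proof is where essentially all the work lies; the rest is formal.
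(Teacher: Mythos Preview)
This theorem is not proved in the paper: it is quoted verbatim from \cite[Theorem~0.1]{flex} (Arzhantsev--Flenner--Kaliman--Kutzschebauch--Zaidenberg) and used as a black box for the rest of the article. There is therefore no proof in the present paper to compare your proposal against.

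For what it is worth, your outline is broadly in the spirit of the argument in the cited source, particularly the replica construction $f\delta$ and the reduction of $(1)\Rightarrow(3)$ to a Key Lemma about spanning $T_yX$ by velocities of $\G_a$-actions that fix a prescribed finite set. Two places where your sketch is genuinely thin: in $(2)\Rightarrow(1)$ you invoke the Frobenius theorem and argue via analytic leaves, which is not directly available in the algebraic category over an arbitrary algebraically closed field of characteristic zero and would need a Lefschetz-type reduction or a formal substitute (the cited paper instead shows that $\SAut X$-orbits are locally closed with tangent space $L_x$, from which transitivity gives $L_x=T_xX$ immediately). And in the Key Lemma, the perturbation you propose --- conjugating $\delta_i$ by $\exp(s\delta')$ to force $\ker\delta_i$ to separate $y$ from $F$ --- is asserted rather than argued: you have not shown that the non-separating locus is proper, nor that a generic conjugation actually moves the relevant orbit-closure incidence while keeping the $u_{\delta_i}(y)$ a basis. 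You correctly identify this as the crux, and in the cited paper it does take real work; your sketch does not yet supply it.
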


  Three classes of flexible affine varieties are described in \cite{ArK}, namely affine cones over flag varieties, non-degenerate toric varieties of dimension $\ge2$, and suspensions over flexible varieties.
  Note that affine cones over del Pezzo varieties of degree $\ge6$ are toric, thereby they are flexible.

In this paper we consider cases of degree 4 and 5. In case of degree 5 we prove flexibility of affine cones corresponding to polarizations defined by arbitrary very ample divisors, whereas for degree 4 we prove flexibility only for certain very ample divisors, the anticanonical one included.   As for del Pezzo surfaces of degree $\le3$, it is proven the non-existence of any $\G_a$-actions on the affine cones over plurianticanonical embeddings, see \cite[Theorem~1.1]{CPW} for the case of degree 3 and \cite[Corollary~1.8]{KPZ4} for the case of degree $\le2$. 

In the proof we use the construction from \cite{Z}, which allows to associate a regular $\G_a$-action on an affine cone over a projective variety $Y$ to every open cylindrical subset of $Y$ of a special form. In Theorem \ref{flex} we provide a criterion of flexibility of an affine cone over a projective variety in terms of a transversal cover by such cylinders. We apply it to del Pezzo surfaces.

The author is grateful to M.G.~Zaidenberg for posing the problem and numerous discussions, to I.V.~Arzhantsev for useful remarks, and to the reviewer for corrections.

\section{Flexibility of affine cones}
 Let $Y$ be a projective variety and $H$ be a very ample divisor on $Y$. A polarization of $Y$ by $H$ provides an embedding $Y\hookrightarrow\PP^n$. Consider an affine cone $X=\AffCone_H Y\subset\A^{n+1}$ with vertex at the origin $0\in\AA^{n+1}$ corresponding to this embedding. There is a natural homothety action of the multiplicative group $\G_m=\G_m(\K)$ of the field $\K$ on $X$.
It defines a grading on the algebra $\K[X]$. A derivation on $\K[X]$ is called \emph{homogeneous} if it sends homogeneous elements into homogeneous ones. A subset of all homogeneous locally nilpotent derivations is denoted by $\HLND(\K[X])$. 
\begin{definition}[{\cite[Definitions 3.1.5, 3.1.7]{Z}}]
 We say that an open subset $U$ of a variety $Y$ is a \emph{cylinder} if $U\cong Z\times \A^1$, where $Z$ is a smooth variety with $\Pic Z=0$.   Given a divisor $H\subset Y$, we say that a cylinder $U$ is \emph{$H$-polar} if $U=Y\setminus \supp D$ for some effective divisor $D\in|d H|$, where $d>0$.
\end{definition}
\begin{definition}
  We call a subset $W\subset Y$ \emph{invariant} with respect to
   a cylinder $U=Z\times \A^1$ if $W\cap U=\pi_1^{-1}(\pi_1(W))$, where $\pi_1\colon U\to Z$ is the first projection of the direct product. In other words, every $\A^1$-fiber of the cylinder is either contained in $W$ or does not meet $W$.
\end{definition}
\begin{definition}
  We say that a variety $Y$ is \emph{transversally covered} by 
  cylinders $U_i$, $i=1,\ldots,s$, if $Y=\bigcup U_i$ and there is no proper subset $W\subset Y$ invariant with respect to all $U_i$.
\end{definition}
Clearly, every cylinder $U_i$ is smooth. Thus, a singular variety $Y$ does not admit a transversal covering by cylinders. It is also clear that $\dim Y\ge1$. 
The following theorem gives a criterion of flexibility for the affine cone over a projective embedding $Y\hookrightarrow\PP^n$ corresponding to the polarization by $H$.
\begin{theorem}\label{flex}
  If for some very ample divisor $H$ on a smooth projective variety $Y$ there exists a transversal covering by $H$-polar cylinders,
   then the affine cone $X=\AffCone_H Y$ is flexible.
\end{theorem}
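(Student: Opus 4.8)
My strategy is to deduce flexibility from Theorem 0.1 (quoted in the excerpt) by showing that $\SAut X$ acts transitively on $X_{\mathrm{reg}}$; equivalently, via Proposition \ref{FML}, it suffices to establish that $\SAut X$ has an open orbit (i.e.\ $\FML(X)$ is trivial) and then upgrade to full transitivity. The engine is the cylinder-to-$\G_a$ construction of \cite{Z}: each $H$-polar cylinder $U_i = Z_i \times \A^1$, being the complement of an effective divisor $D_i \in |d_i H|$, produces a nonzero homogeneous locally nilpotent derivation $\delta_i \in \HLND(\K[X])$, whose associated $\G_a$-action on the cone is "vertical" over the $\A^1$-fibration of $U_i$. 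Concretely, over the affine chart of $X$ sitting above $U_i$ the action translates along the $\A^1$-direction, fixing the $Z_i$-coordinates and acting appropriately on the cone coordinate; away from that chart it is trivial.

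First I would set up notation: write $\pi\colon X\setminus\{0\}\to Y$ for the projection of the punctured cone, and for each $i$ let $X_i = \pi^{-1}(U_i)$, an open $\G_a$-stable (for $\delta_i$) subset of $X$. The key geometric observation is that the orbits of the $\delta_i$-action on $X_i$ project under $\pi$ exactly onto the $\A^1$-fibers of $U_i = Z_i\times\A^1$ (together with the cone direction over each fiber). Now consider the subgroup $G = \langle \exp(t\delta_i) : i=1,\dots,s,\ t\in\K\rangle \subset \SAut X$ and pick a smooth point $x\in X_{\mathrm{reg}}$, lying over a point $y\in Y$. I want to show the $G$-orbit of $x$ is all of $X_{\mathrm{reg}}$. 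The set $W = \overline{\pi(G\cdot x)} \subset Y$ (or rather $\pi(G\cdot x)$ itself) is invariant with respect to every cylinder $U_i$ in the sense of the third definition: indeed if $W$ meets an $\A^1$-fiber of $U_i$ then, by translating with $\exp(t\delta_i)$, the whole fiber lies in $\pi(G\cdot x)$. Since $Y$ is transversally covered, no proper subset is invariant with respect to all $U_i$, so $\pi(G\cdot x)$ is all of $Y$ — giving transitivity "in the base direction." It remains to move freely along the cone direction (the $\G_m$-fiber and the point $0$ apart); this is handled by a standard argument using that one can combine two of the $\G_a$-actions whose orbits through a given fiber are transverse, or by invoking the $\G_m$-grading: the homogeneity of the $\delta_i$ lets one conjugate by the homothety $\G_m$-action to rescale, and a commutator/replica trick then produces enough unipotent elements to reach every point over a fixed $y\in Y_{\mathrm{reg}}$. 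Assembling these, $\SAut X$ acts transitively on $X_{\mathrm{reg}}$, hence $X$ is flexible by Theorem 0.1.

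More carefully, to get the open-orbit statement first (the cleaner intermediate goal), I would argue: the span of the tangent vectors to the $\delta_i$-orbits at a general point $x$ contains the vertical direction of each cylinder through $\pi(x)$, and since finitely many cylinders cover $Y$ transversally, at a general point these verticals together with the cone/homothety-induced directions span the full tangent space $T_xX$ — so $\SAut X$ has an open orbit and $\FML(X)=\K$ by Proposition \ref{FML}. Then I would promote the open orbit to all of $X_{\mathrm{reg}}$ using transversality: the complement of the open orbit is $\SAut X$-invariant and closed, its image in $Y$ would be a proper subset invariant under every $U_i$ (same fiber-translation argument), contradicting transversality unless it is empty (modulo the vertex, which is the only point fixed by all homogeneous actions and lies outside $X_{\mathrm{reg}}$ when $X$ is a genuine cone).

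\textbf{Main obstacle.} The delicate point is not the "base direction" — that is exactly what transversality was defined to give — but controlling the motion \emph{along the fibers of $X\setminus\{0\}\to Y$}, i.e.\ verifying that the $\G_a$-actions coming from distinct cylinders, together with $\G_m$-conjugation, genuinely move a point to every other point over the same $y\in Y$ rather than getting trapped on a subvariety. This requires that at a suitable point two of the cylinders are "transverse" in a strong enough sense, or a separate replica/commutator argument exploiting homogeneity of the derivations; pinning that down rigorously (and checking the vertex is the unique bad point) is where the real work lies. The rest is bookkeeping with the \cite{Z} construction and the definitions of cylinder, invariant subset, and transversal covering.
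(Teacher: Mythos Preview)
Your overall architecture---produce $\G_a$-actions from the $H$-polar cylinders via \cite{Z}, establish an open $\SAut X$-orbit, then upgrade to full transitivity on $X_{\mathrm{reg}}$ by projecting the complement to $Y$ and invoking transversality---matches the paper's. The divergence, and the genuine gap, is in how you obtain the open orbit. Your tangent-space argument needs the cone direction, and you correctly flag that this is the obstacle: the homothety direction is a $\G_m$-tangent, not a $\SAut X$-tangent, so it cannot simply be thrown into the span, and your proposed replica/commutator fix is left unspecified. The paper bypasses this obstacle entirely with an algebraic argument on $\FML^{h}(X)$: if $h=f/g$ were a non-constant homogeneous rational function annihilated by every homogeneous LND, then its zero and pole divisors on $X$ would be invariant under every cylinder $\G_a$-action; being homogeneous, these are cones over effective Cartier divisors in $|d_1H|$ and $|d_2H|$ on $Y$, whose union of supports is then a \emph{proper} subset of $Y$ invariant with respect to every covering cylinder---contradicting transversality. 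Hence $\FML^{h}(X)=\K$, so $\FML(X)=\K$, and Proposition~\ref{FML} yields the open orbit with no fiber-direction analysis at all.

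Your upgrade step also has a small hole: you assert that the image in $Y$ of the complement of the open orbit ``would be a proper subset,'' but this is not automatic---a closed $\SAut X$-invariant subset of $X\setminus\{0\}$ could in principle meet every fiber of $\pi$ without being all of $X\setminus\{0\}$. The missing ingredient, which the paper states explicitly, is that the open $\SAut X$-orbit is $\G_m$-stable (homotheties normalize $\SAut X$, since conjugating an LND by a homothety gives an LND). This forces the complement (minus the origin) to be a subcone, so its projectivization is a genuinely proper closed subset of $Y$, invariant with respect to every cylinder, hence empty by transversality.
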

\begin{proof}
The statement is obvious for $X=\A^{n+1}$. Thus, we may suppose that the vertex of the cone is the only singular point.

  By \cite[Theorem 3.1.9]{Z} for every cylinder on $Y$ there corresponds a homogeneous $\G_a$-action on $X$. Note from the explicit construction in \cite[Proposition 3.1.5]{Z} that the projection  $\pi\colon X^\times=X\setminus\{0\}\to Y$ sends the orbits of this action to fibers of the cylinder on $Y$, and the subset of fixed points on $X$ is the preimage of the cylinder complement.

Let $G\subset \SAut X$ be a subgroup generated by corresponding $\G_a$-actions. Consider an orbit $Gx$ of some point $x\in X^\times$. The image $\pi(Gx)\subset Y$ is invariant w.r.t. all covering cylinders. The transversality condition implies $\pi(Gx)=Y$. Since the group $G$ is generated by homogeneous actions, the natural $\G_m$-action on $X$ by homotheties normalizes the $G$-action on $X$ and sends $G$-orbits to $G$-orbits. Thus, $X^\times$ is a union of $G$-orbits, which projections coincide with $Y$.  Hence $X^\times=\bigcup_{\lambda\in\G_m}\lambda Gx$, where all $G$-orbits are closed in $X^\times$.

 Let us show that there exists the only open $G$-orbit $Gx=X^\times$. Assume the contrary. Then $\dim Gx = \dim Y$ and the stabilizer $S\subset\G_m$ of the orbit $Gx$ is finite. 
 Moreover, since the action of $S$ on $Gx$ is free, for any point $x^\prime\in Gx$ the intersection $Gx\cap \G_m x^\prime$ is an $S$-orbit consisting of $|S|$ distinct points.
  The blow up of $X$ in the origin is the total space of the linear bundle $[-H]$ on $Y$. It has a natural completion --- a  $\PP^1$-bundle $\hat X\to Y$.
  For a general point $x^\prime\in Gx$ the intersection  $\overline{Gx}\cap \overline{\G_m x^\prime}$, where $\overline{Z}$ denotes the closure of $Z\subset X^\times$ in $\hat X$, coincides with the orbit $Sx$. So, the intersection index $\overline{Gx}\cdot \overline{\G_m x^\prime}$ equals $|S|$. Since the intersection index is constant, for any point $x^\prime\in Gx$ there holds $\overline{Gx}\cap \overline{\G_m x^\prime}=Sx^\prime\subset X^\times$. Therefore, a quasi-affine variety $X^\times$ contains a projective one  $\overline{Gx}$, which is a contradiction.
    So, the group $G$ acts on $X^\times$ transitively.
\end{proof}

\section{Del Pezzo surface of degree 5}
Let $Y$ be a del Pezzo surface of degree 5. It is obtained by blowing up the projective plane $\PP^2$ in four points $P_1,\ldots,P_4$, no three of which are collinear \cite[Theorem IV.2.5]{Manin}. Since the automorphism group of the projective plane acts transitively on such 4-tuples of points, such a surface is unique up to isomorphism.
\begin{theorem}\label{theo5}
    Let $H$ be an arbitrary very ample divisor on the del Pezzo surface $Y$ of degree~5. Then the corresponding affine cone $\AffCone_H Y$ is flexible.
\end{theorem}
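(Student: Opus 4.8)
The plan is to apply Theorem~\ref{flex}: for an arbitrary very ample $H$ I will produce a transversal covering of $Y$ by $H$-polar cylinders. Fix a contraction $\pi\colon Y\to\PP^2$ of four disjoint $(-1)$-curves $E_1,\dots,E_4$ lying over general points $P_1,\dots,P_4$, write $h$ for the pullback of the class of a line and $L_{ij}$ ($i\ne j$) for the strict transform of $\overline{P_iP_j}$; then $\Pic Y=\ZZ h\oplus\ZZ E_1\oplus\cdots\oplus\ZZ E_4$, the ten lines on $Y$ are the $E_i$ and the $L_{ij}$, and $\Aut Y\cong S_5$ acts on them as $S_5$ acts on the ten $2$-subsets of $\{1,\dots,5\}$. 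Since $Y$ has no open subset isomorphic to $\A^2$, all the cylinders will be isomorphic to $\G_m\times\A^1$.

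For each partition $\{P_i,P_j\}\sqcup\{P_k,P_l\}$ of the four points set
\[
 U_{ij|kl}:=Y\setminus\bigl(E_1\cup E_2\cup E_3\cup E_4\cup L_{ij}\cup L_{kl}\bigr).
\]
All four points lie on $\overline{P_iP_j}\cup\overline{P_kP_l}$, so $\pi$ restricts to an isomorphism of $U_{ij|kl}$ onto $\PP^2\setminus(\overline{P_iP_j}\cup\overline{P_kP_l})\cong\G_m\times\A^1$; hence $U_{ij|kl}$ is a cylinder, and its $\A^1$-fibres are the strict transforms of the lines of $\PP^2$ through $Q_{ij|kl}:=\overline{P_iP_j}\cap\overline{P_kP_l}$, i.e.\ curves of class $h$ all passing through the point of $Y$ over $Q_{ij|kl}$. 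I claim $U_{ij|kl}$ is $H$-polar for \emph{every} very ample $H$. Writing $H=mh-\sum_{s}n_sE_s$, ampleness gives $n_s=H\cdot E_s>0$ and $m>n_a+n_b=H\cdot L_{ab}$ whenever $a\ne b$. Seeking an effective divisor $aL_{ij}+bL_{kl}+\sum_s c_sE_s\in|dH|$ with all coefficients positive and equating classes, one reduces to the single inequality $m>\max(n_i,n_j)+\max(n_k,n_l)$; the right-hand side equals $n_a+n_b$ for suitable $a\in\{i,j\}$, $b\in\{k,l\}$ with $a\ne b$, so it holds, and taking $d$ large makes the solution integral. In particular $U_{12|34},\,U_{13|24},\,U_{14|23}$ are $H$-polar.

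Applying automorphisms from $\Aut Y\cong S_5$ to these three cylinders yields fifteen $H$-polar cylinders: each $\sigma(U_{ij|kl})$ is again a cylinder, it is $H$-polar because $U_{ij|kl}$ is polar for \emph{all} very ample divisors and $\sigma$ preserves very ampleness, and the fifteen are indexed by a distinguished element of $\{1,\dots,5\}$ together with a pairing of the remaining four. They cover $Y$: the three cylinders $U_{12|34},U_{13|24},U_{14|23}$ already cover $Y\setminus(E_1\cup\cdots\cup E_4)$ (the Petersen configuration of lines is triangle-free, so their complements meet only along $E_1\cup\cdots\cup E_4$), and each $E_m$ lies in a cylinder coming from another contraction.

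The remaining and, I expect, hardest step is transversality. Suppose $W\subsetneq Y$ is a nonempty closed subset invariant with respect to all fifteen cylinders and let $W_0$ be a one-dimensional irreducible component. Invariance with respect to $U_{ij|kl}$ forces $W_0$ to be one of the six lines $E_1,\dots,E_4,L_{ij},L_{kl}$ or the closure of an $\A^1$-fibre, i.e.\ a curve of class $h$ through the point over $Q_{ij|kl}$. Combining the three partitions: if $[W_0]=h$ then $W_0$ is a line through all three diagonal points $\overline{P_1P_2}\cap\overline{P_3P_4}$, $\overline{P_1P_3}\cap\overline{P_2P_4}$, $\overline{P_1P_4}\cap\overline{P_2P_3}$, impossible because in characteristic zero these are not collinear; otherwise $W_0\in\{E_1,\dots,E_4,L_{12},L_{34}\}\cap\{E_1,\dots,E_4,L_{13},L_{24}\}\cap\{E_1,\dots,E_4,L_{14},L_{23}\}=\{E_1,E_2,E_3,E_4\}$. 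But no $E_m$ is invariant with respect to the whole family: there is a cylinder in it whose complement avoids $E_m$, and then $E_m$ meets that cylinder in a dense open subset of an irreducible curve, which cannot be a union of $\A^1$-fibres since the fibre class (a translate of $h$) has self-intersection $1\ne-1=E_m^2$. Hence $W$ has no one-dimensional component, so $W$ is finite, hence contained in the intersection of all cylinder complements, which is empty because the cylinders cover $Y$; so $W=\emptyset$. The covering is transversal, and Theorem~\ref{flex} yields the flexibility of $\AffCone_H Y$.
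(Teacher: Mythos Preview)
Your proof is correct and follows essentially the same route as the paper: you build the same fifteen cylinders (three per blow-down, transported by $\Aut Y\cong S_5$), verify $H$-polarity for every ample $H$, and establish transversality using that the three diagonal points of the complete quadrangle are not collinear. The paper checks $H$-polarity by showing the extremal rays of $\overline{\Ample Y}$ lie in the cone spanned by the six boundary classes, while you do the equivalent direct inequality $m>\max(n_i,n_j)+\max(n_k,n_l)$; and for transversality the paper argues by ``spreading'' (a fibre of $U_1$ meets almost all fibres of $U_2$, forcing $W$ and $Y\setminus W$ each to contain a dense open), whereas you analyse irreducible components of a closed invariant $W$---but both arguments rest on exactly the same non-collinearity fact.

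Two small imprecisions are worth tightening. First, the sentence ``each $E_m$ lies in a cylinder coming from another contraction'' is false as stated: no single cylinder among the fifteen contains $E_m$ (the six $(-1)$-curves disjoint from $E_m$ form a $6$-cycle in the Petersen graph, which has no independent set of size four). What is true, and what you need, is that every \emph{point} of $E_m$---including the three intersection points $E_m\cap L_{mj}$---lies in some cylinder; this is easily checked. Second, the definition of transversal covering forbids \emph{any} proper invariant subset $W$, not only closed ones, so your component argument does not literally verify the hypothesis of Theorem~\ref{flex}. This is harmless because the proof of Theorem~\ref{flex} only ever applies the hypothesis to closed $W$; alternatively, the paper's spreading argument shows that any invariant $W$ and its complement each contain a nonempty open set, which is impossible on an irreducible surface.
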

The proof proceeds in several steps, see Sections \ref{3.1} and \ref{3.2}.
We let $E_i$ denote the exceptional divisor (i.e. the $(-1)$-curve), which is the preimage of the blown up point $P_i$.
Let $e_0$ be the divisor class of a line, which contains none of the points $P_i$, and let $e_i$ $(i=1,\ldots,4)$ be a divisor class of $E_i$. These classes generate a Picard group $\Pic Y=\bangle{e_0,\ldots,e_4}_\ZZ\cong \ZZ^{5}$. The intersection index defines a symmetric bilinear form on the Picard group such that the basis $\{e_0,\ldots,e_4\}$ is orthogonal, $e_0^2=1$ and $e_i^2=-1$.
Exceptional divisor classes are $e_i$ and $e_0-e_i-e_j$ for distinct $i, j\neq0$.

By Kleiman's ampleness criterion \cite[Theorem 1.4.9]{Laz} the closure of the ample cone $\Ample Y$ is dual to the cone of effective divisors $\Eff(Y)$. In case of a del Pezzo surface of degree $<8$ the cone $\Eff(Y)$ is generated by exceptional divisors \cite[Theorem 8.2.19]{Dolg}. Therefore, the ample cone is defined by inequalities
\begin{align}
  x_0>0,\; x_i<&0\qquad i=1,\ldots,4,\\
  x_0+x_i+x_j>&0,\qquad 0\neq i\neq j\neq0,
\end{align}
where $(x_0,\ldots,x_4)\in\Pic Y$. It has the following ten extremal rays
\begin{equation}\label{extrays5}
  e_0,\;e_0-e_j,\;2 e_0-\sum_{i\neq 0} e_i,\;2 e_0-\sum_{i\neq 0,j} e_i \quad\mbox{ where } j=1,\ldots,4.
\end{equation}

For five of them the corresponding orthogonal facet of the effective cone contains four non-intersecting (-1)-curves. They define the contraction $Y\to \PP^2$ corresponding to the chosen extremal ray.

Any other ray defines a pencil of quadrics on $Y$. More precisely, an orthogonal complement to the ray contains three pairs of intersecting (-1)-curves which define the degenerate fibers of the pencil. Herewith, the class of the pencil fiber belongs to the chosen ray.

\subsection{Cylinders}\label{3.1} 
Let us fix a blowdown $\varphi\colon Y\to \PP^2$ of four pairwise disjoint $(-1)$-curves $E_1,\ldots,E_4$ into points $P_1,\ldots, P_4$ using the notation as above. Let $l_{ij}\subset\PP^2$ be the line passing through the points $P_i$ and $P_j$. Consider the open subset $U_1=\varphi^{-1}(\PP^2\setminus(l_{12}\cup l_{34}))\subset Y$. This is a cylinder defined by the pencil of lines passing through the base point $\Bs(U_1)=l_{12}\cap l_{34}$. We have $U_1\cong \A^1_*\times\A^1$, where $\A^1_*=\A^1\setminus\{0\}$.
Similarly let $U_2=\varphi^{-1}(\PP^2\setminus(l_{13}\cup l_{24}))$ and $U_3=\varphi^{-1}(\PP^2\setminus(l_{14}\cup l_{23}))$, see fig.~\ref{graph5-3}. Furthermore, consider the blowings down of other 4-tuples of non-intersecting $(-1)$-curves on $Y$. There are five of them as shown on fig.~\ref{graph5-5}. For every blowing down we define three cylinders in a similar way. Note that these cylinders are in one-to-one correspondence with the intersection points of the (-1)-curves, and the automorphism group $\Aut Y\cong S_5$ acts transitively on them.
\begin{figure}[h]
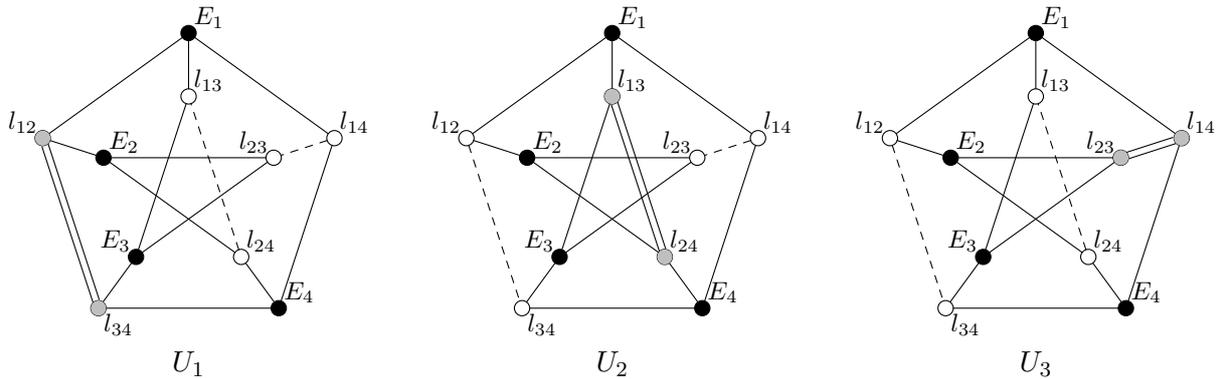

\begin{minipage}[h]{0.3\linewidth}
\center{\includegraphics[width=1.\linewidth]{petersen.0} \\ $U_1$}
\end{minipage}
\hfill
\begin{minipage}[h]{0.3\linewidth}
\center{\includegraphics[width=1.\linewidth]{petersen.1} \\ $U_2$}
\end{minipage}
\hfill
\begin{minipage}[h]{0.3\linewidth}
\center{\includegraphics[width=1.\linewidth]{petersen.2} \\ $U_3$}
\end{minipage}
\caption{ Arrangement of cylinders on the incidence graph of $(-1)$-curves on the del Pezzo surface of degree 5. The gray and the black vertices correspond to $(-1)$-curves forming the complement to a cylinder. The dashed edges correspond to $(-1)$-curve intersections contained in the cylinder. The double edge corresponds to the base point of the cylinder.}
\label{graph5-3}
\end{figure}
\begin{figure}[h]
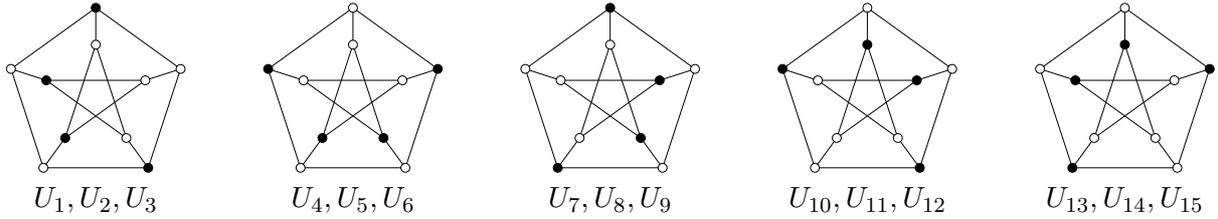

\begin{minipage}[h]{0.15\linewidth}
\center{\includegraphics[width=1.\linewidth]{petersen.3} \\ $U_1,U_2,U_3$}
\end{minipage}
\hfill
\begin{minipage}[h]{0.15\linewidth}
\center{\includegraphics[width=1.\linewidth]{petersen.4} \\ $U_4,U_5,U_6$}
\end{minipage}
\hfill
\begin{minipage}[h]{0.15\linewidth}
\center{\includegraphics[width=1.\linewidth]{petersen.5} \\ $U_7,U_8,U_9$}
\end{minipage}
\hfill
\begin{minipage}[h]{0.15\linewidth}
\center{\includegraphics[width=1.\linewidth]{petersen.6} \\ $U_{10},U_{11},U_{12}$}
\end{minipage}
\hfill
\begin{minipage}[h]{0.15\linewidth}
\center{\includegraphics[width=1.\linewidth]{petersen.7} \\ $U_{13},U_{14},U_{15}$}
\end{minipage}
\caption{Black vertices correspond to 4-tuples of $(-1)$-curves. Every blowing down defines three cylinders similarly as on fig.~\ref{graph5-3}.}
\label{graph5-5}
\end{figure}

 Thus we have cylinders $U_1,U_2,\ldots,U_{15}$ as shown on Figures \ref{graph5-3} and \ref{graph5-5}. It is easy to check that every intersection of $(-1)$-curves is contained in some cylinder, hence $\bigcup U_i=Y$. We claim that there is no proper subset $W\subset Y$, which is invariant with respect to all 15 cylinders. Assume that there exists such a subset $W$. Let us fix an arbitrary point of $W$. It is contained in a fiber $S$ of some cylinder, hence $W$ contains $S$. Without loss of generality $S$ is a fiber of $U_1$. Then the line $l=\overline{\varphi(S)}\subset\PP^2$ passes through the base point $\Bs(U_1)$. Since the points $\Bs(U_1), \Bs(U_2)$, and $\Bs(U_3)$ do not lie on the same line, one of them does not belong to~$l$. Suppose $\Bs(U_2)\notin l$. Then the fiber $S$ intersects almost every fiber of the cylinder $U_2$, and $W$ contains them. So, $W$ is dense in $Y$. The complement $Y\setminus W$ is also invariant with respect to all cylinders, and by the same reason it is dense in $Y$, a contradiction.

\subsection{Polarity condition}
\label{3.2}
In this subsection we show that for any ample divisor $H$ on $Y$ all the 15 cylinders $U_i$ are $H$-polar.
 Consider the set of effective divisors $\{\alpha_i E_i+\beta_{1}l_{12}+\beta_3 l_{34}\;|\; \alpha_i,\beta_i>0\}$ whose support is the complement to $U_1$. The image of this set in the Picard group is an open cone $C$, whose extremal rays are $e_1,\; e_2,\; e_3,\; e_4,\;  e_0-e_1-e_2,$ and $e_0-e_3-e_4$.
It is easy to check that the primitive vectors of the ample cone (\ref{extrays5}) can be expressed as linear combinations with non-negative rational coefficients of the primitive vectors of the cone $C$. Therefore the cylinder $U_1$ is $H$-polar for any ample divisor $H$. By automorphisms $\Aut Y$ we may translate $U_1$ to any cylinder, hence the cylinders $U_i$ are $H$-polar for any ample divisor $H$.
Using Theorem \ref{flex} we obtain the assertion. Now Theorem \ref{theo5} is proved.

\section{Del Pezzo surfaces of degree 4}
Every del Pezzo surface of degree 4 is isomorphic to a blowing up of a projective plane $\PP^2$ in five points, where no three are collinear. Such surfaces form a two-parameter family.

By $E_i$ we denote the $(-1)$-curve which is the preimages of the blown up point $P_i$.
As before, let $e_0$ be the divisor class of a line which does not contain the blown up points, and $e_i\;(i=1,\ldots,5)$ be the divisor class of $E_i$. A set $\{e_0,\ldots,e_5\}$ forms an orthogonal basis of the Picard group $\Pic Y\cong\ZZ^6$, and $e_0^2=1,\; e_i^2=-1$.
The classes of $(-1)$-curves are $e_i,\;e_0-e_i-e_j,\;2e_0-\sum_{k\neq0}e_k$ for any pair of distinct indices $i, j\neq0$.
The ample cone is defined by inequalities
\begin{align}
  x_0>0,\; x_i<&0\qquad i=1,\ldots,5,\\
  x_0+x_i+x_j>&0,\qquad 0\neq i\neq j\neq0, \\
  2x_0+x_1+\ldots+x_5>&0,
\end{align}
where $(x_0,\ldots,x_5)\in\Pic Y$. Its extremal rays are
\begin{equation}\label{extrays4}
  e_0,\;e_0-e_j,\;2 e_0-\sum_{k\neq 0,i} e_k,\;2 e_0-\sum_{k\neq 0,i,j} e_k,\mbox{ and }3 e_0-\sum_{k\neq 0} e_k-e_i
\end{equation}
for any pair of distinct indices $i,j\in\{1,\ldots,5\}$. Similarly to the case of del Pezzo surface of degree 5, sixteen extremal rays correspond to blowings down $Y\to \PP^2$, and ten rays correspond to pencils of quadrics on $Y$.
\subsection{Cylinders}
Let us fix some $(-1)$-curve $C_1$ and consider the blowing down $\sigma_1\colon Y\to\PP^2$ of five $(-1)$-curves $F_1,\ldots,F_5$ that meet $C_1$, see fig.~\ref{graph4}. This blowing down is well defined since the contracted divisors do not intersect.
The image $\sigma_1(C_1)$ is a smooth quadric $c$ passing through the blown down points $Q_1,\ldots,Q_5$.
Take an arbitrary line $l\subset\PP^2$ which is tangent to $c$ at a point  different from $Q_1,\ldots,Q_5$. A quadric pencil in $\PP^2$ generated by divisors $c$ and $2l$ determines a cylinder $U\cong\A^1_*\times\A^1\subset Y$ whose complement is the complete 	preimage of the support of the divisor $c+2l$ on $\PP^2$. Denote by $\U_{C_1}$ the family of all such cylinders in $Y$ for all such tangents $l$.
 Note that $Y\setminus \bigcup_{U\in \U_{C_1}}U$ is a union of $C_1$ and the exceptional divisors $F_i \;(i=1,\ldots,5)$.
Apply this construction to the $(-1)$-curves $C_2,\ldots,C_5$, which form a 5-cycle along with $C_1$ on the incidence graph as shown on fig.~\ref{graph4}. Overall we obtain five cylinder families $\U_{C_1},\ldots,\U_{C_5}$. It is easy to see that their union covers $Y$.

 Let $W$ be a proper subset of $Y$ which is invariant with respect to the cylinders of all families, and let $w\in W$ be an arbitrary point. We may suppose that $w$ belongs to a cylinder of the family $\U_{C_1}$. Then the image $\sigma_1(W)\subset\PP^2$ is invariant with respect to the cylinder family $\{\sigma_1(U)\;|\; U\in \U_{C_1}\}$. Note that every cylinder of this family is a complement to the quadric $c$ and its tangent line. It is well known that given a quadric and two points outside it we can find a quadric passing through these two points and tangent to the given quadric. Therefore, for almost every point $x\in \PP^2\setminus c$ there exists a fiber of some cylinder which contains $x$ and $\sigma_1(w)$. Namely, $x$ must not lie on the tangent line to $c$ passing through $\sigma_1(w)$ as well as on the quadrics which are tangent to $c$ at blown down points and contain $\sigma_1(w)$. Thus $W$ is dense in $Y$. Similarly, $Y\setminus W$ is dense in $Y$, a contradiction. Finally, the families $\U_{C_1},\ldots,\U_{C_5}$ form a transversal cover of~$Y$.
\begin{figure}[h]
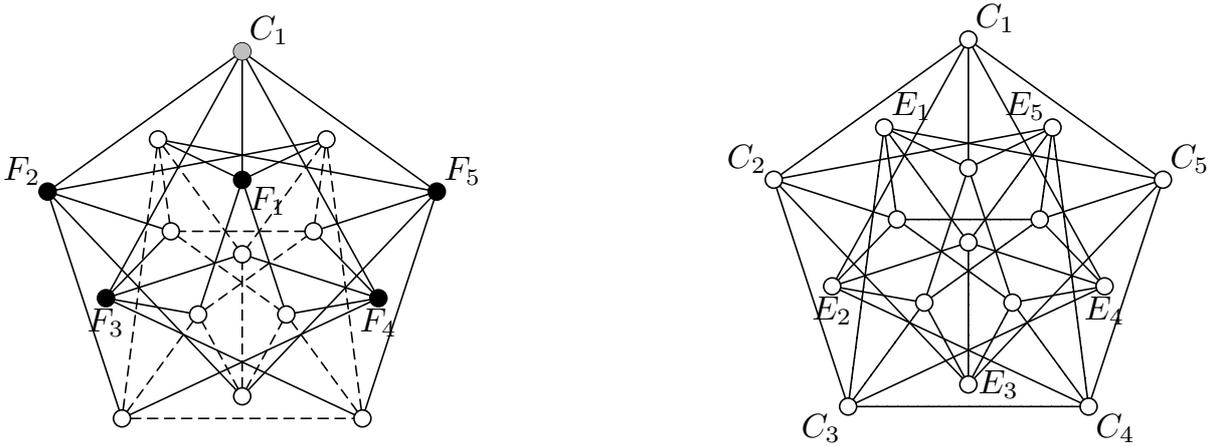

\begin{minipage}[h]{0.4\linewidth}
\center{\includegraphics[width=1.\linewidth]{clebsch.1}}
\end{minipage}
\hfill
\begin{minipage}[h]{0.4\linewidth}
\center{\includegraphics[width=1.\linewidth]{clebsch.0}}
\end{minipage}
\caption{ The incidence graph of $(-1)$-curves on a del Pezzo surface of degree 4. On the left the gray vertex corresponds to the quadric preimage $C_1$ and black vertices correspond to the contracted $(-1)$-curves. The dashed edges correspond to $(-1)$-curve intersections contained in the cylinders of a family. Four other families corresponding to $C_2,\ldots, C_5$ are obtained symmetrically by the graph rotations.}
\label{graph4}
\end{figure}

\subsection{Polarity condition}
 Ample divisors $H$ such that cylinders of the family $\U_{C_1}$ are $H$-polar, are exactly the ample divisors in the open cone $\Ample Y\cap\{\alpha_1F_1+\ldots+\alpha_5F_5+\alpha_6C_1+\alpha_7\sigma_1^{-1}(l)\;|\;\alpha_j>0\}$ in $\Pic Y$. We define such a cone for every $C_i,\, i=1,\ldots,5$ and denote it by $\Ample(C_i,Y)$. It does not depend on a choice of a tangent line $l$ since it does not contain blown up points by definition.
Then the set of divisors $H$ such that cylinders in $\bigcup_i \U_{C_i}$ are $H$-polar is an open cone $\bigcap_i \Ample(C_i,Y)$. A computation shows that it has exactly 72 extremal rays, which can be expressed as
{\allowdisplaybreaks
\begin{align*}
e_0&,&9e_0-5e_{i_1}-e_{i_2}-2e_{i_3}-4e_{i_4}-3e_{i_5},\\
4e_0&-2e_{i_1}-2e_{i_2}-e_{i_3}-e_{i_4}-e_{i_5},&9e_0-4e_{i_1}-4e_{i_2}-4e_{i_3}-2e_{i_4}-2e_{i_5},\\
5e_0&-2e_{i_1}-2e_{i_2}-e_{i_3}-3e_{i_4}-e_{i_5},&11e_0-6e_{i_1}-2e_{i_2}-2e_{i_3}-4e_{i_4}-4e_{i_5},\\
5e_0&-2e_{i_1}-2e_{i_2}-2e_{i_3}-2e_{i_4},&11e_0-6e_{i_1}-4e_{i_2}-4e_{i_3}-2e_{i_4}-2e_{i_5},\\
5e_0&-2e_{i_1}-2e_{i_2}-2e_{i_3}-2e_{i_4}-2e_{i_5},&11e_0-6e_{i_1}-2e_{i_2}-4e_{i_3}-4e_{i_4}-4e_{i_5},\\
6e_0&-2e_{i_1}-2e_{i_2}-3e_{i_3}-e_{i_4}-3e_{i_5},&11e_0-6e_{i_1}-4e_{i_2}-4e_{i_3}-4e_{i_4}-2e_{i_5},\\
7e_0&-4e_{i_1}-2e_{i_2}-2e_{i_3}-2e_{i_4}-2e_{i_5},&15e_0-8e_{i_1}-2e_{i_2}-4e_{i_3}-6e_{i_4}-6e_{i_5},\\
9e_0&-5e_{i_1}-3e_{i_2}-4e_{i_3}-2e_{i_4}-1e_{i_5},&15e_0-8e_{i_1}-6e_{i_2}-6e_{i_3}-4e_{i_4}-2e_{i_5},
\end{align*}}
where the tuple $(i_1,\ldots,i_5)$ runs over all cyclic permutations of $(1,2,3,4,5)$.

It is easy to see that the anticanonical divisor $(-K_Y)$ is contained in $\bigcap_i \Ample(C_i,Y)$.
Similarly to Theorem \ref{theo5} we obtain the following result.
\begin{theorem}
     Let $Y$ be a del Pezzo surface of degree 4, and $H$ be a very ample divisor in the open cone $\bigcap_{i=1}^5 \Ample(C_i,Y)$. Then the affine cone $\AffCone_H Y$ is flexible. In particular, this holds for the anticanonical divisor $H=-K_Y$.
\end{theorem}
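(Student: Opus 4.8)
The plan is to argue as in the proof of Theorem~\ref{theo5}: I will exhibit a transversal covering of $Y$ by $H$-polar cylinders and then quote Theorem~\ref{flex}. Both ingredients have been prepared in the two preceding subsections, so the work is mainly to assemble them and, separately, to locate the anticanonical class inside the relevant cone.

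First I would recall that the cylinder families $\U_{C_1},\dots,\U_{C_5}$ constructed above cover $Y$ and that no proper subset $W\subset Y$ is invariant with respect to all of their cylinders: the fibre of one cylinder through a chosen point of $W$ is sent by the contraction to a smooth conic, and, since through any two points off a smooth conic $c$ there passes a conic tangent to $c$ (whose proper transform is again a cylinder fibre), this forces $W$ — and by the same reasoning $Y\setminus W$ — to contain a dense open subset of $Y$, which is impossible as the two sets are disjoint. Next I would use the description of the $H$-polar locus: for a cylinder of $\U_{C_i}$ attached to a tangent line $l$, its complement is the support of the pullback of $c+2l$, namely of an effective divisor of class $[C_i]+[F_1]+\dots+[F_5]+2L$, where $F_1,\dots,F_5$ are the five $(-1)$-curves meeting $C_i$ and $L$ is the class of a line for the corresponding blowing down; as $L$ does not depend on $l$, the cylinder is $H$-polar exactly when $H$ lies in the open cone $\Ample(C_i,Y)$ generated by $[C_i],[F_1],\dots,[F_5],L$. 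Hence, for $H\in\bigcap_{i=1}^{5}\Ample(C_i,Y)$, every cylinder in every family is $H$-polar, and Theorem~\ref{flex} gives the flexibility of $\AffCone_H Y$.

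For the anticanonical case I would note that $-K_Y$ is very ample on a del Pezzo surface of degree $4$ — it defines the embedding into $\PP^4$ as an intersection of two quadrics — and that $-K_Y=3e_0-e_1-\dots-e_5$ belongs to each $\Ample(C_i,Y)$: taking $C_1=E_1$, so that the five $(-1)$-curves meeting $E_1$ are $e_0-e_1-e_j$ $(j=2,\dots,5)$ and $2e_0-e_1-\dots-e_5$ and the line class of their contraction is $L=3e_0-2e_1-e_2-e_3-e_4-e_5$, one has the positive combination $-K_Y=\tfrac43\,e_1+\tfrac13\bigl(2e_0-e_1-e_2-e_3-e_4-e_5\bigr)+\tfrac13\sum_{j=2}^{5}\bigl(e_0-e_1-e_j\bigr)+\tfrac13 L$, so $-K_Y$ lies in the interior of $\Ample(E_1,Y)$; by the symmetry of the construction the same holds for $C_2,\dots,C_5$, whence $-K_Y\in\bigcap_{i}\Ample(C_i,Y)$.

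The one point that needs care is that the families $\U_{C_i}$ are infinite, while Theorem~\ref{flex} is stated for a finite covering. I expect to dispose of this either by remarking that the proof of Theorem~\ref{flex} never uses finiteness — it only needs that each cylinder gives a $\G_a$-action whose orbits project onto its fibres and that $Y$ has no proper closed subset invariant under all the actions — so that it applies verbatim here, or else by retaining three cylinders from each family, associated to tangent lines in general position, which still cover $Y$ and remain transversal because any two fibres coming from distinct families meet, their classes having strictly positive intersection number on $Y$. Apart from this, and from the routine verification that the displayed combination is correct and that $\bigcap_i\Ample(C_i,Y)$ has the claimed $72$ extremal rays, the argument is immediate from the preceding subsections.
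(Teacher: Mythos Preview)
Your proposal is correct and follows essentially the same route as the paper: assemble the transversal cylinder covering from \S4.1 with the $H$-polarity cones $\Ample(C_i,Y)$ from \S4.2 and invoke Theorem~\ref{flex}, then check $-K_Y\in\bigcap_i\Ample(C_i,Y)$. You are in fact more careful than the paper on two points: you give an explicit positive combination exhibiting $-K_Y$ in $\Ample(E_1,Y)$ (the paper just says ``it is easy to see''), and you flag the finiteness hypothesis in Theorem~\ref{flex}, which the paper silently ignores; your option~(a) --- that the proof of Theorem~\ref{flex} nowhere uses finiteness --- is the clean fix, whereas your option~(b) would need a bit more than a positive intersection number to force transversality with only finitely many cylinders per family.
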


  We have identified a subcone of the ample cone such that the very ample divisors contained in this subcone define a flexible affine cone. However, this subcone is strictly contained in the ample cone. For example, the ample divisor class $8e_0-2e_1-4e_2-e_3-e_4-3e_5$ lies outside of that subcone. Thus the flexibility problem for the affine cone over the polarization of a del Pezzo surface of degree~4 by any very ample divisor remains open.

\end{document}